\setlist[itemize]{leftmargin=*} 
\theoremstyle{plain}
\newtheorem*{thm*}{Theorem}
\newtheorem{thm}{Theorem}
\Crefname{thm}{Theorem}{Theorems}
\newtheorem*{lem*}{Lemma}
\Crefname{lem}{Lemma}{Lemmas}
\newtheorem*{claim*}{Claim}
\newtheorem{claim}[thm]{Claim}
\crefname{claim}{Claim}{Claims}
\Crefname{claim}{Claim}{Claims}
\Crefname{prop}{Proposition}{Propositions}
\crefname{cor}{Corollary}{Corollaries}
\crefname{conj}{Conjecture}{Conjectures}
\Crefname{qn}{Question}{Questions}
\Crefname{obs}{Observation}{Observations}
\Crefname{ex}{Example}{Examples}
\theoremstyle{definition}
\Crefname{prob}{Problem}{Problems}
\newtheorem{defn}[thm]{Definition}
\Crefname{defn}{Definition}{Definitions}
\theoremstyle{remark}
\renewenvironment{proof}[1][]{\begin{trivlist}
\item[\hspace{\labelsep}{\bf\noindent Proof#1.\/}] }{\qed\end{trivlist}}
\newcommand{\remove}[1]{}
\newcommand{\floor}[1]{
    \lfloor #1 \rfloor
}
\DeclareMathOperator{\Ex}{ex}
\newcommand{\HH}{\mathcal{H}}
\renewcommand{\P}{\mathcal{P}}
\newcommand{\E}{\mathbb{E}}
\newcommand{\F}{\mathcal{F}}
\begin{document}


\title{Many $H$-copies in graphs with a forbidden tree}

\author{
    Shoham Letzter\thanks{
        ETH Institute for Theoretical Studies,
        ETH,
        8092 Zurich;
        e-mail: \texttt{shoham.letzter}@\texttt{eth-its.ethz.ch}.
		Research supported by Dr.~Max R\"ossler, the Walter Haefner Foundation
		and the ETH Zurich Foundation.
    }
}

\date{}
\maketitle

\begin{abstract}

	\setlength{\parskip}{\medskipamount}
    \setlength{\parindent}{0pt}
    \noindent

	For graphs $H$ and $F$, let $\Ex(n, H, F)$ be the maximum possible number of copies of $H$ in an $F$-free graph on $n$ vertices.
	The study of this function, which generalises the well-studied Tur\'an numbers of graphs, was initiated recently by Alon and Shikhelman. 
	We show that if $F$ is a tree then $\Ex(n, H, F) = \Theta(n^r)$ for some integer $r = r(H, F)$, thus answering one of their questions.

\end{abstract}

	\section{Introduction} \label{sec:intro}

		Given graphs $H$ and $F$ with no isolated vertices and an integer $n$, let $\Ex(n, H, F)$ be the maximum possible number of copies of $H$ in an $F$-free graph on $n$ vertices.
		This function was introduced recently by Alon and Shikhelman \cite{alon-shikhelman}.
		In the special case where $H = K_2$, this is the maximum possible number of edges in an $F$-free graph on $n$ vertices, known as the \emph{Tur\'an number} of $F$, which is one of the main topics in extremal graph theory (see e.g.\ \cite{turan-survey} for a survey). 
		
		A few instances of $\Ex(n, H, F)$, with $H \neq K_2$, where studied prior to \cite{alon-shikhelman}. The first of these is due to Erd\H{o}s \cite{erdos-turan-cliques} who determined $\Ex(n, K_r, K_s)$ for all $r$ and $s$ (see also \cite{bollobas-turan-cliques}). 
		
		A different example that has received considerable attention recently is $\Ex(n, C_r, C_s)$ for various values of $r$ and $s$. In 2008 Bollob\'as and Gy\H{o}ri \cite{bollobas-gyori} showed that $\Ex(n, K_3, C_5) = \Theta(n^{3/2})$, and their upper bound has been improved several times \cite{alon-shikhelman,ergemlidze-turan-triangle-pentagon}.
		Gy\H{o}ri and Li \cite{gyori-li} obtained upper and lower bounds on $\Ex(n, K_3, C_{2k+1})$, that were subsequently improved by F\"uredi and \"Ozkahaya \cite{furedi-ozkahaya} and by Alon and Shikhelman \cite{alon-shikhelman}. Moreover, the number $\Ex(n, C_5, K_3)$ was calculated precisely \cite{grzesik,hatami-pentagon-triangle}. Very recently, Gishboliner and Shapira \cite{gishboliner-shapira} determined $\Ex(n, C_r, C_s)$, up to a constant factor, for all $r > 3$, and, additionally, they studied $\Ex(n, K_3, C_s)$ for even $r$. Some additional more precise estimates for $\Ex(n, C_r, C_s)$ are known (see \cite{grzesik-kielak,gerbner-turan-even-cycles}).

		Another notable example is $\Ex(n, H, T)$ when $T$ is a tree. Alon and Shikhelman \cite{alon-shikhelman} showed that if $H$ is also a tree then the following holds.
		\begin{equation} \label{eqn:turan-tree-tree}
			\Ex(n, H, T) = \Theta(n^r) \text{\,\, for some integer $r = (F, H)$}.
		\end{equation}
		See also \cite{gyori-turan-path-path} for the study of the special case where $T$ and $H$ are paths. Alon and Shikhelman asked if \eqref{eqn:turan-tree-tree} still holds if only $T$ is required to be a tree (and $H$ is an arbitrary graph).

		\begin{thm} \label{thm:main}
			Let $H$ be a graph and let $T$ be a tree. Then there exists an integer $r = r(H, T)$ such that $\Ex(n, H, T) = \Theta(n^r)$.
		\end{thm}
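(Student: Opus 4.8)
The plan is to identify the exponent $r=r(H,T)$, verify it is achieved by a construction, and then prove a matching upper bound; the essential content is that $r$ is an integer and that no $T$-free graph beats the extremal construction by more than a constant factor.

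\textbf{Reduction to an upper bound.} Write $k=e(T)$. A greedy embedding shows that any graph of minimum degree at least $k$ contains $T$, so a $T$-free graph $G$ on $n$ vertices is $(k-1)$-degenerate and has at most $(k-1)n$ edges. Fixing a degeneracy ordering $v_1,\dots,v_n$ (each $v_i$ having at most $k-1$ earlier neighbours) and grouping copies of $H$ by the linear order they induce on $V(H)$, one checks that a copy inducing a fixed order has at most $O(n^{\alpha(H)})$ completions: the vertices of $H$ with no later neighbour form an independent set, so there are at most $\alpha(H)$ of them, and every other vertex has at most $k-1$ choices once its latest neighbour has been placed. Hence $\Ex(n,H,T)=O(n^{\alpha(H)})$, so the function is polynomially bounded. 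Assuming the nondegenerate case $T\not\subseteq H$ (otherwise no $T$-free graph contains $H$ and $\Ex\equiv 0$), define $r=r(H,T)$ to be the largest integer with $\Ex(n,H,T)=\Omega(n^r)$; this is well defined and at most $\alpha(H)$. The theorem then reduces to two tasks: exhibiting constructions witnessing the claimed size of $r$, and proving the matching bound $\Ex(n,H,T)=O(n^r)$ — which in particular forces $\log\Ex(n,H,T)/\log n\to r\in\mathbb Z$. The crude bound above is typically far from tight (e.g.\ $\Ex(n,P_3,K_{1,3})=\Theta(n)$ while $\alpha(P_3)=2$), so the structure of $T$-free graphs, not merely their degeneracy, must enter.

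\textbf{Lower bound.} Taking $\Theta(n)$ disjoint copies of $H$ gives $r\ge 1$; more generally, taking disjoint copies of a fixed $T$-free host large enough to contain $H$ and placing the connected components of $H$ in distinct copies gives $r\ge c(H)$, where $c(H)$ is the number of components of $H$. To obtain larger exponents I would use \emph{partial blow-ups}: fix a bounded $T$-free graph $G_0$ and an independent set $S\subseteq V(G_0)$, and replace each vertex of $S$ by $\Theta(n)$ twins. The result is still $T$-free — the twins only increase the degrees of vertices of $G_0\setminus S$ and never create the branching that a tree $T$ would need once the unblown part of the host is fixed (this requires a short, somewhat case-heavy verification using that $T$ is a tree) — and it contains $\Theta(n^m)$ copies of $H$, where $m$ is the largest number of vertices of $H$ that a strict homomorphism $H\to G_0$ can send into $S$. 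Combining disjoint copies of several such gadgets yields a family of constructions, and, using the degeneracy bound to restrict the relevant $G_0$ and blow-up sizes, one checks that the best exponent over this family is precisely the integer $r$.

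\textbf{Upper bound.} I would prove $\Ex(n,H,T)=O(n^{r})$ by induction on $e(T)$, the base case $e(T)\le 2$ being elementary ($T$-free graphs are then edgeless, or matchings, for which the count is immediate). Pick a leaf $\ell$ of $T$ with neighbour $p$ and set $T^-=T-\ell$, a tree with $k-1$ edges. Let $G$ be $T$-free on $n$ vertices and split $V(G)=A\cup B$ with $B=\{v:\deg_G v\ge k\}$. Then $G[A]$ has bounded maximum degree (less than $k$), while $G[B]$ is $T^-$-free: if some $v\in B$ carried a copy of $T^-$ with $p\mapsto v$, then, as $\deg_G v\ge k$ exceeds $|V(T^-)|-1$, the vertex $v$ would have a neighbour outside that copy, and extending along $\ell$ would give a copy of $T$ in $G$. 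Moreover the very condition placing $v$ in $B$ restricts how the neighbourhood of $v$ can attach to the rest of $G$, giving recursive structure one tracks. Now count copies of $H$ according to how $V(H)$ splits between $A$ and $B$: the part landing in $B$ is a copy of $H[V_B]$ in the $T^-$-free graph $G[B]$, of which there are $O(n^{r(H[V_B],T^-)})$ by the inductive hypothesis; the part landing in $A$ contributes an honest factor of $n$ only for vertices opening a new component of $H[V_A]$ in the counting order, since $G[A]$ has bounded degree; and the edges of $H$ crossing between $V_A$ and $V_B$ restrict which splits are feasible. Summing over the boundedly many splits and types gives $\Ex(n,H,T)=O(n^{r})$ with an exponent that is visibly an integer and matches the construction.

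\textbf{Main obstacle.} The delicate point is the interaction across the split $A\cup B$: a vertex of $H$ assigned to $A$ but adjacent to several vertices assigned to $B$ may have its image forced into a common neighbourhood of the corresponding $B$-vertices, which may be empty or tiny — so many a priori contributing splits in fact contribute nothing. Getting this bookkeeping exactly right, so that each split contributes an exponent that is genuinely at most $r(H,T)$ (and never, say, $r+\tfrac12$, or $r$ times a logarithmic factor), is where the full recursive structure of "$v$ fails to host $(T^-,p)$" must be unpacked and matched against the constructions; the choice of leaf $\ell$ also has to be made (or optimised) so that this works. Once this is carried out, the induction on $e(T)$ closes and yields $\Ex(n,H,T)=\Theta(n^{r})$.
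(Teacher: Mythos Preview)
Your lower-bound construction is incorrect as stated: blowing up an independent set of a $T$-free graph into twins need \emph{not} preserve $T$-freeness. Take $T=K_{1,3}$, $G_0=K_2$, and $S$ one endpoint; replacing that endpoint by three twins yields $K_{1,3}$. So the sentence ``the twins only increase the degrees of vertices of $G_0\setminus S$ and never create the branching that a tree $T$ would need'' is exactly backwards---increasing those degrees is precisely what creates the branching. The paper's construction is different in kind: it fixes $U\subseteq V(H)$, takes many disjoint copies of $H$ and identifies only the vertices of $U$ across copies (the $(U,t)$-blow-up), and it \emph{builds into the definition of $r(H,T)$} the requirement that the $(U,|T|)$-blow-up be $T$-free, rather than trying to derive $T$-freeness of the blow-up from $T$-freeness of a smaller host.

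Your upper-bound induction has the gap you flag, and it is a real one, not just missing bookkeeping. Take $H=K_{1,m}$ with $m\ge 3$ and $T=P_4$, so that $\Ex(n,H,T)=\Theta(n^m)$ (a single star $K_{1,n-1}$ is $P_4$-free). In your split with $k=3$, the centre of $H$ must lie in $B$ and, taking all $m$ leaves in $A$, you have $H[V_B]$ a single vertex (contributing a factor $|B|=O(n)$ by your inductive bound $r(K_1,P_3)=1$), while $H[V_A]$ consists of $m$ isolated vertices, each ``opening a new component'' and each constrained only to be a neighbour of the placed centre---of which there may be $\Theta(n)$. Your scheme thus outputs $O(n)\cdot O(n^m)=O(n^{m+1})$, off by a full factor of $n$. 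The missing saving is a trade-off (few centres of high degree versus many centres of low degree) that the product $r(H[V_B],T^-)\times(\text{components of }H[V_A])$ cannot see, because the two factors are bounded independently. There is no evident way to push this through by choosing the leaf $\ell$ more cleverly. The paper takes a completely different, non-inductive route: after an $h$-partite reduction and a degeneracy ordering, it runs an iterative procedure producing a nested sequence of large $H$-copy families with controlled branching along each ordered edge of $H$; analysing the resulting $2$-coloured digraph on $V(H)$ via its blue strongly connected components yields a set $U$ with more than $r$ components in $H\setminus U$, and then a copy of $T$ is embedded directly into $G$ one subtree at a time.
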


		We present the proof in the next section, and conclude the paper in \Cref{sec:conc} with some closing remarks.

	\section{The proof} \label{sec:proof}

		Our aim is to prove that $\Ex(n, H, T) = \Theta(n^r)$ for some integer $r$. In fact, we shall prove this statement with an explicit value of $r$. For that we need the following two definitions.

		\begin{defn} \label{defn:blow-up}
			Given a graph $H$, a subset $U \subseteq V(H)$ and an integer $t$, the \emph{$(U, t)$-blow-up of $H$} is the graph obtained by taking $t$ copies of $H$ and identifying all the vertices that correspond to $u$, for each $u \in U$.
		\end{defn}

		\begin{defn} \label{defn:r}
			Given graphs $H$ and $T$, let $r(H, T)$ be the maximum number of components in $H \setminus U$, over subsets $U \subseteq V(H)$ for which the $(U, |T|)$-blow-up of $H$ is $T$-free. (See \Cref{fig:blow-up} for an example.)
		\end{defn}

		\begin{figure}[h] 
			\includegraphics[scale = 1]{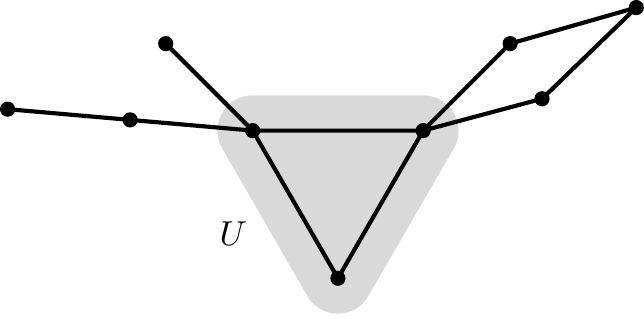}
			\hspace{1cm}
			\includegraphics[scale = 1]{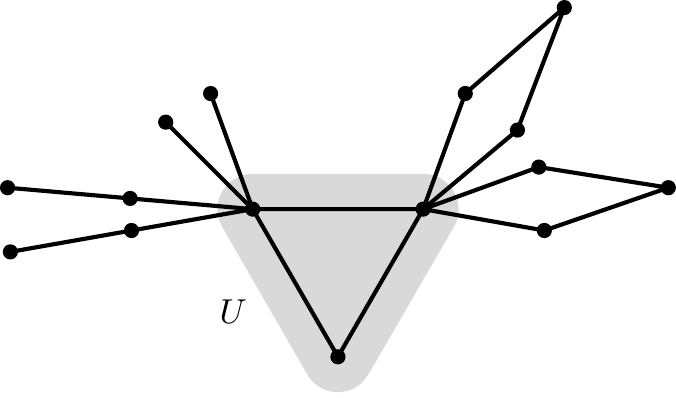}
			\caption{A graph $H$ and a subset $U \subseteq V(H)$ and the $(U, 2)$-blow-up of $H$.}
			\label{fig:blow-up}
		\end{figure}

		In the following theorem we estimate $\Ex(n, H, T)$, where $T$ is a tree, in terms of the value $r(H, T)$. Note that \Cref{thm:main} follows immediately.

		\begin{thm} \label{thm:main-expicit}
			Let $H$ be a graph and let $T$ be a tree. Then $\Ex(n, H, T) = \Theta(n^r)$, where $r = r(H, T)$.
		\end{thm}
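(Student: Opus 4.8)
Throughout set $t = |V(T)|$ and $r = r(H,T)$, and assume $H$ is $T$-free (otherwise no $T$-free graph contains $H$, so $\Ex(n,H,T) = 0$ and the statement is understood vacuously, the maximum in \Cref{defn:r} being over the empty family). The plan is to establish the two bounds $\Ex(n,H,T) = \Omega(n^r)$ and $\Ex(n,H,T) = O(n^r)$ separately. For the lower bound, fix $U \subseteq V(H)$ attaining the maximum in \Cref{defn:r}, so that $H \setminus U$ has components $C_1,\dots,C_r$ and the $(U,t)$-blow-up of $H$ is $T$-free. I would build a graph $G_n$ on at most $n$ vertices by taking one ``central'' copy of $U$ carrying the edges of $H[U]$ and, for each $i$, taking $m = \Theta(n)$ pairwise vertex-disjoint copies of $C_i$, each joined to the central $U$ exactly as $C_i$ is joined to $U$ in $H$, with no other edges. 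Choosing one copy of each $C_i$ and taking its union with $U$ yields a copy of $H$, and distinct choices give copies on distinct vertex sets, so $G_n$ has at least $m^r = \Omega(n^r)$ copies of $H$. And $G_n$ is $T$-free: as $T$ is connected on $t$ vertices, the image of a copy of $T$ in $G_n$ meets at most $t$ of the component-copies, hence lies in the union of $U$ with at most $t$ of them, and such a subgraph embeds into the $(U,t)$-blow-up of $H$, which is $T$-free. This gives $\Ex(n,H,T) = \Omega(n^r)$.

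For the upper bound I would prove by induction on $|V(H)|$ that every $T$-free graph $G$ on $n$ vertices has at most $C(H,T)\,n^r$ copies of $H$, starting from two standard reductions. First, a $T$-free graph is $(t-2)$-degenerate, since a graph of minimum degree at least $t-1$ admits a greedy embedding of $T$; hence every subgraph of $G$ has a vertex of degree at most $t-2$, so $\sum_{v} \deg_G(v) = 2e(G) \le 2(t-2)n$, and a connected graph on few vertices has, once one of its vertices is fixed, only boundedly many embeddings into a subgraph of $G$ of bounded maximum degree. Second, if $H = H_1 \sqcup H_2$ then the number of copies of $H$ is $O(N_1 N_2)$ where $N_j$ is the number of copies of $H_j$, and one checks directly --- using that $T$ is connected, so a disjoint union is $T$-free iff each part is --- that $r(H,T) = r(H_1,T) + r(H_2,T)$; so it suffices to bound the number of copies of connected $H$.

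For connected $H$ the engine is as follows. Fix a large constant $D = D(T) > 2(t-2)$, let $B = \{v \in V(G) : \deg_G(v) \ge D\}$, and split the copies of $H$ according to the set $U_\phi := \phi^{-1}(B)$ (there are at most $2^{|V(H)|}$ possibilities). If $U_\phi = V(H)$ then $\phi$ lives inside $G[B]$, a $T$-free graph on $|B| \le 2(t-2)n/D$ vertices, and one iterates this reduction; since $D > 2(t-2)$ the vertex counts decay geometrically, so these copies contribute a convergent geometric series and cost only a constant factor. Otherwise $U := U_\phi \subsetneq V(H)$, and each component $C_1,\dots,C_k$ of $H \setminus U$ maps into $G \setminus B$, a subgraph of maximum degree $< D$; since $H$ is connected and $U \neq \emptyset$, each $C_i$ is joined in $H$ to some $u_i \in U$, so given $\phi|_U$ it can be placed in $O(\deg_G(\phi(u_i)))$ ways. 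Summing the resulting product $\prod_i \deg_G(\phi(u_i))$ over embeddings $\phi|_U$ of the strictly smaller graph $H[U]$ --- using $\sum_v \deg_G(v) = O(n)$ and the induction hypothesis for $H[U]$ (in a form that carries these degree weights) --- should bound the number of copies with this $U_\phi$ by $O(n^{k})$. Finally, the point that pins the exponent to $r$ rather than to $k$: only sets $U$ with $T$-free $(U,t)$-blow-up can arise as $U_\phi$ for a ``large'' family of copies, so $k = k(U) \le r$. Indeed, if many copies shared the same $\phi|_U$ but were otherwise nearly disjoint, one could select $t$ of them whose images pairwise meet only in $\phi(U)$, producing an embedded $(U,t)$-blow-up of $H$ inside $G$; by maximality of $r$ this blow-up is not $T$-free, so $G$ would contain $T$, a contradiction. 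Summing over the possible $U$ then gives $O(n^r)$, which with the lower bound yields $\Ex(n,H,T) = \Theta(n^r)$.

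The main obstacle I anticipate is the bookkeeping around the core $U_\phi$. Two points need care. First, the count of the components must be organised so that $\sum_{\phi|_U} \prod_i \deg_G(\phi(u_i)) = O(n^{k})$ --- a H\"older/rearrangement estimate that has to be fused with the induction hypothesis for $H[U]$, and the degenerate case where every $C_i$ is a single vertex (so that $H[U]$ together with the pendant components is not strictly smaller than $H$) has to be handled directly by the rearrangement inequality. Second, the step ``select $t$ nearly disjoint copies with the same $\phi|_U$'' requires a sunflower/Helly-type argument: the pieces being made disjoint are connected subgraphs of bounded size sitting inside a bounded-degree subgraph of $G$, so a family of them that is too large to be thinned to $t$ pairwise disjoint members must all pass through one of boundedly many vertices, which (after fixing that vertex) bounds the family by a constant. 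Making these quantitative with an explicit $C(H,T)$, and checking that the degree threshold $D$ interacts correctly across the geometric iteration, is where the real work lies.
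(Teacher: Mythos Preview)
Your lower bound is correct and essentially the paper's. The upper bound, however, has a real gap at the crucial step.

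You assert that if the $(U,t)$-blow-up of $H$ is not $T$-free then only few $H$-copies can have $U_\phi = U$, because many copies sharing the same $\phi|_U$ would allow you to select $t$ of them meeting only in $\phi(U)$, producing an embedded $(U,t)$-blow-up. This sunflower step fails in general. Suppose $H\setminus U$ has components $C_1,C_2$ and, for some fixed $\phi|_U$, $C_1$ admits a \emph{unique} placement in $G\setminus B$ while $C_2$ admits $M$ placements with $M$ arbitrarily large. Then there are $\sim M$ extensions of $\phi|_U$, yet no two of them are disjoint outside $\phi(U)$ (they all share the $C_1$-piece), so no blow-up is obtained and no contradiction arises. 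Even if one grants a per-$\phi|_U$ bound, you would still need to bound the number of relevant restrictions $\phi|_U$ by $O(n^r)$; via your induction this would require $r(H[U],T)\le r(H,T)$, which is false in general (take $H=C_4$, $T=P_5$, $U$ the two non-adjacent vertices: then $H[U]$ is two isolated vertices with $r(H[U],T)=2$, while $r(C_4,P_5)=1$).

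The paper's proof circumvents exactly these obstacles by \emph{not} attempting to locate a literal blow-up inside $G$; it explicitly remarks that the blow-up-finding strategy, which Alon and Shikhelman used when $H$ is a tree, is replaced here by something subtler. After passing to a labelled $|V(H)|$-partite subgraph and fixing a degeneracy ordering, the paper runs an iterative procedure that classifies each \emph{ordered} edge $uw$ of $H$ as ``blue'' (bounded degree from $V_u$ into $V_w$) or ``red'' (large degree), and produces a nested sequence $\F_1\supseteq\cdots\supseteq\F_t$ of large $H$-families. From the blue digraph it extracts a set $U$ with more than $r$ components in $H\setminus U$, so the abstract $(U,t)$-blow-up contains a copy of $T$; then it embeds $T$ into $G$ directly, one maximal subtree at a time, using red edges to branch into fresh $H$-copies and blue edges to bound overlaps, descending through the levels $\F_t,\F_{t-1},\ldots$ as it goes. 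This edge-by-edge orientation refinement is precisely what your global high-degree/low-degree split on vertices is too coarse to capture.
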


		The lower bound follows quite easily from the definition of $r(H,T)$, so the main work goes into proving the matching upper bound. In \cite{alon-shikhelman} Alon and Shikhelman proved the same statement under the additional assumption that $H$ is a tree. In order to prove the upper bound, they showed that a graph $G$ which is $T$-free and has at least $c \cdot n^r$ copies of $H$ contains a $(U, |T|)$-blow-up of $H$, for some $U \subseteq V(H)$ such that $H \setminus U$ has at least $r+1$ components. Since $G$ is $T$-free, it follows that the $(U, |T|)$-blow-up is also $T$-free, which implies that $G$ has fewer than $c \cdot n^{r(H,T)}$ copies of $H$, as required. Our ideas are somewhat similar, but we do not prove that $G$ contains such a blow-up. Instead, we find a subgraph $G'$ of $G$ with many $H$-copies that behaves somewhat similarly to a $(U, |T|)$-blow-up of $H$, for some $U$ for which the number of components of $H \setminus U$ is larger than $r$. We then show that if the blow-up contains a copy of $T$ then so does $G'$. It again follows that the number of $H$-copies in $G$ is smaller than $c \cdot n^{r(H,T)}$.

		\begin{proof}[ of \Cref{thm:main-expicit}]

			Let $r = r(H, T)$, $h = |H|$, $t = |T|$ and $m = \Ex(n, H, T)$. Our aim is to show that $m = \Theta(n^r)$.
			
			We first show that $m = \Omega(n^r)$. Indeed, let $U \subseteq V(H)$ be such that $H \setminus U$ has $r$ components and the $(U, t)$-blow-up of $H$ is $T$-free. Let $G$ be the $(U, n/h)$-blow-up of $H$. Note that $G$ is $T$-free; indeed, otherwise, since any $T$-copy in $G$ uses vertices from at most $t$ copies of $H$, it would follow that the $(U, t)$-blow-up of $H$ is not $T$-free. Additionally, the number of $H$-copies in $G$ is at least $(n / h)^r$ since, for every component in $H \setminus U$, we can choose any of the $n / h$ copies of it in $G$, and together with $U$ this forms a copy of $H$.   

			The remainder of the proof will be devoted to proving the upper bound $m = O(n^r)$. Suppose to the contrary that $m \ge c \cdot n^r$, for a sufficiently large constant $c$. Let $G$ be a $T$-free graph on $n$ vertices with $m$ copies of $H$.

			We wish to replace $G$ with a subgraph $G'$ that has many $H$-copies and is somewhat similar to a $(U, t)$-blow-up of $H$ for an appropriate $U$. We shall obtain the required subgraph in three steps.

			First, we replace $G$ with an $r$-partite subgraph that still has many $H$-copies. 
			To achieve this goal, pick a label in $V(H)$ uniformly at random
			for each vertex in $G$. Denote by $X$ the number of $H$-copies in $G$ for which each vertex $u \in V(H)$ is mapped to a vertex in $G$ that received the label $u$. It is easy to see that the $\E(X) = m / h^h$. It follows that there exists a partition $\{V_u\}_{u \in V(H)}$ of the vertices of $G$ for which $X \ge m / h^h$. Fix such a partition, and denote by $\HH_0$ the family of $H$-copies for which every $u \in V(H)$ is mapped to $V_u$ (so $|\HH_0| \ge m / h^h$). Let $G_0$ be the subgraph of $G$ whose edges are the union of the edges in all $H$-copies in $\HH_0$.

			Next, since $G_0$ is $T$-free (as it is a subgraph of $G$), it is $d$-degenerate; fix an ordering $<$ of $V(G_0)$ such that every vertex $u$ has at most $t$ neighbours that appear after $u$ in $<$.
			Every $H$-copy inherits an ordering of $V(H)$ from $<$. Denote by $<_H$ the most popular such ordering and let $\HH_1$ be the subfamily of $H$-copies in $\HH_0$ that received the ordering $<_H$ (so $|\HH_1| \ge m / (h^h h!)$).

			We now turn to the final step towards obtaining the required subgraph of $G$. Ideally, we would have liked to find a graph $H_2$, which is the union of $\Omega(m)$ copies of $H$ in $\HH_1$, and which satisfies the following: for every $uw \in E(H)$, either all vertices in $V_u$ have small degree into $V_w$, or all vertices in $V_u$ have much larger degree into $V_w$. Such a property would allow us to show that if a suitable $(U, t)$-blow-up of $H$ contains a copy of $T$, then so does $H_2$. However, it is not clear if such a family of $H$-copies exists. Instead, we aim for a sequence of graphs 
			$H_2^{(1)} \supseteq \ldots \supseteq H_2^{(t)}$ (each of which is a union of a large collection of $H$-copies in $\HH_1$) such that for every $uw \in H$, either all vertices in $V_u$ have small degree into $V_w$ in the graph $H_2^{(1)}$, or all non-isolated vertices in $H_2^{(i+1)}$ have much larger degree into $V_w$ in the graph $H_2^{(i)}$. Such a sequence still allows us to find a copy of $T$ in $H_2^{(1)}$, under the assumption that a certain $(U,t)$-blow-up of $H$ contains a copy of $T$, using the fact that $T$ is a tree.
			In order to find the required sequence of graphs, pick constants $t \ll c_1 \ll \ldots \ll c_{e(H)} \ll c$, and follow Procedure~\ref{alg:H} below.
			\algsetup{indent = 1.5em}
			\floatname{algorithm}{Procedure}
			\newcommand{\algorithmicbreak}{\textbf{break}}
			\newcommand{\BREAK}{\STATE \algorithmicbreak}
			\begin{algorithm}[h]
				\setstretch{1.1}
				\caption{Modifying $\HH_1$}
				\label{alg:H}
				\begin{algorithmic}
					\STATE{$\HH_1^{(1)} = \HH_1$.}				
					\STATE{Set $E_1$ to be the set of ordered pairs $\{uw : uw \in E(H), u >_H w\}$ (so $|E_1| = e(H)$).}
					\STATE{Set $i = 1$, $j = 1$.}
					\WHILE{$i \le e(H), j < t$}
							\STATE{For every $e = uw \in E_i$, let $B_e$ be the set of vertices in $V_u$ whose degree into $V_w$, in the graph formed by the union of $H$-copies in $\HH_i^{(j)}$, is at most $c_i$.}
							\IF{at least half the $H$-copies in $\HH_i^{(j)}$ avoid $\bigcup_{e \in E_i} B_e$}
								\STATE{Set $\HH_i^{(j+1)}$ to be the family of $H$-copies in $\HH_i^{(j)}$ that avoid $\bigcup_{e \in E_i} B_e$.}
								\STATE{$j \leftarrow j+1$.}
							\ELSE
								\STATE{Let $e \in E_i$ be such that at least $\frac{1}{2|E_i|}$ of the $H$-copies in $\HH_i^{(j)}$ are incident with $B_e$.}
								\STATE{Set $\HH_{i+1}^{(1)}$ to be the family of $H$-copies in $\HH_i^{(j)}$ that are incident with $B_e$.}
								\STATE{Set $E_{i+1} = E_i \setminus \{e\}$.}
								\STATE{$i \leftarrow i+1$, $j \leftarrow 1$.}	
							\ENDIF
					\ENDWHILE
				\end{algorithmic}
			\end{algorithm}
			 
			Note that the procedure ends either with $i = e(H) + 1$ and $E_1 = \emptyset$, or with $i \le e(H)$, $j = t$ and $|E_i| = e(H) - (i-1)$. Let $l$ be the value of $i$ at the end of the procedure. In the next claim we show that the latter case holds, i.e.\ $l \le e(H)$. 

			\begin{claim} \label{claim:l-small}
				$l \le e(H)$.
			\end{claim}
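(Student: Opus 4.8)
The plan is to argue by contradiction: assume $l = e(H)+1$, so that the procedure deletes every edge of $H$ from $E_1$, performing exactly one Case~2 step at each of the levels $1,\dots,e(H)$ and then exiting the while‑loop (since $i = e(H)+1 > e(H)$) with the family $\HH_{e(H)+1}^{(1)}$. The first task is to check this family is still large. Between levels, each Case~1 step multiplies the size of the current family by at least $\tfrac12$, and at every level $j$ stays below $t$, so there are at most $t-2$ Case~1 steps per level; each Case~2 step multiplies the size by at least $\tfrac{1}{2|E_i|} \ge \tfrac{1}{2e(H)}$, and there are exactly $e(H)$ of them. Hence $|\HH_{e(H)+1}^{(1)}| \ge |\HH_1|/C_0$ for a constant $C_0 = C_0(e(H),t)$, and combined with $|\HH_1| \ge m/(h^h h!)$ and $m \ge c n^r$ this gives $|\HH_{e(H)+1}^{(1)}| \ge c n^r/(h^h h! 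C_0)$.

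Next I would record the structural consequence of deleting every edge. If $uw \in E(H)$ with $u >_H w$, then the pair $uw$ was the edge chosen at some level $k \le e(H)$, and by construction every vertex of $V_u$ used by $\HH_{k+1}^{(1)}$ lies in $B_{uw}$, i.e.\ has at most $c_k \le c_{e(H)}$ neighbours in $V_w$ inside the union of the $H$-copies of $\HH_{k}^{(j)}$; since $\HH_{e(H)+1}^{(1)} \subseteq \HH_{k+1}^{(1)}$, the same bound holds inside the (smaller) graph $H_2^{(1)}$ that is the union of the $H$-copies in $\HH_{e(H)+1}^{(1)}$. Call $v \in V(H)$ a \emph{sink} if all its $H$-neighbours precede $v$ in $<_H$, and let $\sigma$ be the number of sinks; since $H$ has no isolated vertices, two adjacent sinks are impossible, so the sinks form an independent set, and writing $U_0$ for the set of non-sinks, $H \setminus U_0$ has exactly $\sigma$ components. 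Now count the $H$-copies in $\HH_{e(H)+1}^{(1)}$ by placing the vertices of a copy greedily in decreasing $<_H$-order: a non-sink $u$ has a larger-in-$<_H$ neighbour already placed, which by the previous sentence has at most $c_{e(H)}$ neighbours in $V_u$, so it contributes at most $c_{e(H)}$ choices, while a sink contributes at most $n$ choices. This yields $|\HH_{e(H)+1}^{(1)}| \le c_{e(H)}^{\,h}\, n^{\sigma}$.

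The crux is then to show $\sigma \le r$, which would combine with the two displayed bounds to give $c n^r/(h^h h! C_0) \le c_{e(H)}^{\,h} n^{\sigma} \le c_{e(H)}^{\,h} n^{r}$, forcing $c \le c_{e(H)}^{\,h} h^h h! C_0$ and contradicting the choice of $c$. Since $H \setminus U_0$ has $\sigma$ components, by the definition of $r(H,T)$ it suffices to prove that the $(U_0,t)$-blow-up of $H$ is $T$-free, and for this I would embed that blow-up into the $T$-free graph $G_0$. Concretely, one wants a single copy $P$ of $H[U_0]$ in $G_0$ such that for \emph{every} sink $v$ simultaneously the common neighbourhood of $P(N_H(v))$ inside $V_v$ has at least $t$ vertices; gluing $t$ such extensions onto $P$ (they live in the pairwise disjoint sets $V_v$, hence do not clash) realises the whole $(U_0,t)$-blow-up inside $G_0$, so it is $T$-free.

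The main obstacle is producing this base copy $P$: a crude averaging over the $\Omega(n^r)$ copies in $\HH_{e(H)+1}^{(1)}$ only yields a $P$ for which the \emph{product} over sinks of the number of extensions is large, not each factor individually. I expect this to be handled by refining $\HH_{e(H)+1}^{(1)}$ one sink at a time: for each sink $v$ pass either to the (still $\ge \tfrac12$-sized) subfamily whose copies have at least $t$ extensions at $v$, or to the subfamily whose copies have fewer than $t$ such extensions; if the second alternative ever occurs one bounds the size of the resulting subfamily directly (its $v$-coordinate is now confined to a set of size $<t$ given the rest), again contradicting $|\HH_{e(H)+1}^{(1)}| = \Omega(n^r)$ once $c$ is large, and if the first alternative always occurs the final subfamily supplies the desired base $P$. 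Making the "bounded a subfamily directly'' step clean — choosing the right order in which to place the remaining vertices so that the gain from the sink $v$ is not cancelled by a loss elsewhere — is the delicate point.
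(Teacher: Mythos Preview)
Your setup and the lower bound on $|\HH_{e(H)+1}^{(1)}|$ are correct, as is your count $|\HH_{e(H)+1}^{(1)}| \le c_{e(H)}^{\,h} n^{\sigma}$. The gap is in your attempt to establish $\sigma \le r$. In the ``else'' branch of your refinement (some sink $v$ has fewer than $t$ extensions for more than half the copies) you want to bound that subfamily by roughly $n^{\sigma-1}$, but this need not hold: removing $v$ from $H$ can create new sinks (any vertex whose only larger-$<_H$ neighbour is $v$ becomes a sink of $H\setminus\{v\}$), so the greedy count on $H\setminus\{v\}$ does not drop by a factor of $n$. More fundamentally, the bounds you have so far only give $\sigma \ge r$; once $\sigma > r$ an $O(n^{\sigma-1})$ estimate cannot contradict $|\HH_{e(H)+1}^{(1)}| = \Omega(n^{r})$, so there is no leverage in that branch. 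You flag this as ``the delicate point'', but it is not a matter of being clean --- the argument as outlined is circular.

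The paper sidesteps this entirely by exploiting the degeneracy ordering $<$ that was set up when defining $\HH_1$. Every copy in $\HH_1$ (hence in $\F=\HH_{e(H)+1}^{(1)}$) respects $<_H$, so if $u <_H w$ then the image of $w$ is a \emph{forward} neighbour (with respect to $<$) of the image of $u$, and there are at most $t \le c_{l-1}$ such forward neighbours. Combined with the procedure's bound for $u >_H w$, this means that along \emph{every} edge of $H$, fixing one endpoint leaves at most $c_{l-1}$ choices for the other. Now one can count copies in $\F$ by fixing a single vertex in each connected component of $H$ and extending greedily along a spanning tree, obtaining $|\F| \le (c_{l-1})^{h^2} n^{a}$ where $a$ is the number of components of $H$. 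Finally $a \le r$ is immediate: take $U=\emptyset$, so that the $(\emptyset,t)$-blow-up of $H$ is just $t$ disjoint copies of $H$, which is $T$-free since $H$ is. This yields the contradiction without ever needing to embed a nontrivial blow-up inside $G_0$. In short, the key observation you are missing is that the degeneracy of $G_0$ already supplies the reverse-direction degree bound, making the whole $\sigma \le r$ detour unnecessary.
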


			\begin{proof}
				Suppose that $l > e(H)$. It follows that for every $uw \in E(H)$, every vertex in $V_u$ sends at most $c_{l-1}$ edges into $V_w$.

				Let $a$ be the number of connected components in $H$. Note that the $(\emptyset, t)$-blow-up of $H$ is $T$-free (it is a disjoint union of copies of $H$, and we may assume that $H$ is $T$-free, as otherwise $m = 0$ and we are done immediately), and has $a$ components. Thus, by \Cref{defn:r}, we have $a \le r$.
								
				Write $\F = \HH_l^{(1)}$ and let $F$ be the graph formed by taking the union of all $H$-copies in $\F$. Note that $|\F| \ge (\frac{1}{2e(H)})^{t \cdot e(H)}|\HH_1| \ge (\frac{1}{2e(H)})^{t \cdot e(H)}\frac{1}{h^h h!} m > \frac{1}{\sqrt{c}} \cdot m$ (as $c$ is large).
				
				In order to upper-bound the number of $H$-copies in $\F$, let $U$ be a set of vertices in $H$ that contains exactly one vertex from each component. Trivially, there are at most $n^a$ ways to map each vertex $u \in U$ to a vertex in $V_u$. Fix such a mapping. Let $w$ be a vertex in $H$ with a neighbour $u \in U$. Since $w$ is mapped to one of the neighbours in $V_w$ of the vertex that $u$ is mapped to, there are at most $c_{l-1}$ vertices that $w$ can be mapped to. Similarly, if $w$ is in distance $i$ from a vertex $u \in U$, there are at most $(c_{l-1})^i$ vertices that $w$ can be mapped to. By choice of $U$, every vertex in $H$ is in distance at most $h$ from some vertex in $U$, hence there are at most $(c_{l-1})^{h^2}$ ways to complete the embedding of $U$ to an $H$-copy in $\F$. In total, we find that $|\F| \le (c_{l-1})^{h^2} \cdot n^a < \sqrt{c} \cdot n^a$. 
				
				Putting the two bounds on $|\F|$ together, we have $m < c n^a \le c n^r$, a contradiction. It follows that $l \le e(H)$, as desired. 
			\end{proof}

			From now on, we may assume that $l \le e(H)$, which means that $\HH_l^{(t)}$ has been defined. Write $\F_i = \HH_l^{(i)}$, and denote by $F_i$ the graph formed by the union of all $H$-copies in $\F_i$. 
			Let $D$ be the directed graph on vertex set $V(H)$ with edges $\{uw, wu : uw \in E(H)\}$ (so each edge in $H$ is replaced by two directed edges, one in each direction). We $2$-colour the edges of $D$: colour the edges in $E_l$ red and colour the remaining edges blue. (Note that if $uw$ is red then $wu$ is blue.) Denote the graph of blue edges by $D_B$ and the graph of red edges by $D_R$.
			We shall use the following properties of $F_i$ and $\F_i$.

			\begin{claim} \label{claim:properties-F-i}

				The following two properties hold for $2 \le i \le t$.
				\begin{enumerate}[label = {\rm (\roman*)}]
					\item
						every non-isolated vertex in $F_i$ is contained in an $H$-copy in $\F_{i-1}$,
					\item
						let $uw$ be a red edge in $D$ and let $S = \bigcup_{v:\text{ there is a blue path from $v$ to $w$}}V_v$. Then for every non-isolated vertex $x \in V_u$ there is a collection of $t$ copies of $H$ in $\F_{i-1}$ that contain $x$ and whose intersections with $S$ are pairwise vertex-disjoint.
				\end{enumerate}

			\end{claim}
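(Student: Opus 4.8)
Part (i) is immediate from the definitions: the edges of $F_i$ are exactly those appearing in some $H$-copy of $\F_i$, so a non-isolated vertex of $F_i$ is a vertex of such a copy $C$, and since $\F_i=\HH_l^{(i)}$ is obtained from $\F_{i-1}=\HH_l^{(i-1)}$ by deleting copies we have $C\in\F_{i-1}$.

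For part (ii) I would first read off what the hypothesis gives. By (i), $x$ lies in an $H$-copy of $\F_i$; every copy in $\F_i$ avoids $\bigcup_{e\in E_l}B_e$ (with $B_e$ computed from $F_{i-1}$), so in particular $x\notin B_{uw}$, i.e.\ $x$ has more than $c_l$ neighbours in $V_w$ inside $F_{i-1}$. Since $H$-copies respect the partition $\{V_v\}$, each edge $xy$ with $y\in V_w$ belongs to an $H$-copy of $\F_{i-1}$ that maps $u$ to $x$ and $w$ to $y$; choosing one such copy for each $y$ produces a family $\mathcal{C}$ of more than $c_l$ copies in $\F_{i-1}$, all through $x$, with pairwise distinct images of $w$. (Every copy in $\mathcal{C}$ contains $x$, so for them to be pairwise disjoint on $S$ I also need $x\notin S$, i.e.\ $V_u\cap S=\emptyset$; this is a small point to be checked from the definition of $S$ and the colouring.)

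The crux is a degree bound along blue edges: \emph{if $(a,b)$ is a blue edge of $D$, then every $q\in V_a$ lying in some copy of $\F_{i-1}$ has at most $c_{l-1}$ neighbours in $V_b$ inside $F_{i-1}$} (and at most $t$ if $l=1$). If $a<_H b$, then since copies in $\F_{i-1}$ induce the order $<_H$ via $<$, every neighbour $q'$ of $q$ in $V_b\cap F_{i-1}$ comes after $q$ in $<$ (in the copy carrying the edge $qq'$ it is the image of $b$, and $b>_H a$), so there are at most $t$ of them by the choice of $<$. If $a>_H b$, then $(a,b)\in E_1\setminus E_l$, so the pair $\{a,b\}$ was deleted at some level $l'<l$; deletion means passing to the $H$-copies incident with the set $B_{(a,b)}$ formed there, and since each copy has exactly one vertex in $V_a$, that vertex lies in $B_{(a,b)}$ for every copy of $\F_{i-1}$ and hence has at most $c_{l'}\le c_{l-1}$ neighbours in $V_b$ inside the graph used at that stage — a graph containing $F_{i-1}$, because the families, and so their union graphs, only shrink during Procedure~\ref{alg:H}. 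Chaining this bound along a shortest blue path $v=v_0\to\cdots\to v_k=w$ (so $k<h$) shows that for each fixed $z\in V_v$ there are at most $c_{l-1}^{\,h}$ possible images of $w$ among copies of $\F_{i-1}$ sending $v$ to $z$, hence at most $c_{l-1}^{\,h}$ copies of $\mathcal{C}$ contain $z$.

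Part (ii) then follows greedily. Given $H'_1,\dots,H'_s\in\mathcal{C}$ with $s<t$ and pairwise disjoint intersections with $S$, set $Z=\bigcup_a\bigl(V(H'_a)\cap S\bigr)$, so $|Z|<th$; a copy of $\mathcal{C}$ fails to be disjoint from $Z$ on $S$ only if it contains one of the fewer than $th$ vertices of $Z$, and each such vertex is in at most $c_{l-1}^{\,h}$ copies of $\mathcal{C}$ by the previous paragraph, so as long as $c_l>th\cdot c_{l-1}^{\,h}$ — which the hierarchy $t\ll c_1\ll\cdots\ll c_{e(H)}$ provides — some copy of $\mathcal{C}$ can be taken as $H'_{s+1}$; iterating produces the required $t$ copies. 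I expect the blue-edge degree bound, and within it the deleted-edge case, to be the real obstacle: one must verify that ``incident with $B_e$'' pins down the unique vertex of a copy in $V_a$, and that the degree bound it supplies was measured in a graph containing $F_{i-1}$. The constant bookkeeping, the edge case $l=1$, and the verification $x\notin S$ are comparatively routine.
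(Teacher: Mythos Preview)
Your argument is correct and follows the paper's approach; in fact you supply more detail than the paper on why degrees along blue edges are bounded by $c_{l-1}$ (the paper simply asserts this, whereas you correctly split into the cases $a<_H b$, handled via the ordering $<$, and $a>_H b$, handled via the stage at which the edge was removed).

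One remark on the point you flag as ``a small point to be checked from the definition of $S$ and the colouring'': that $x\notin S$ does \emph{not} follow purely from those definitions---a red edge $(u,w)$ can coexist combinatorially with a blue path from $u$ to $w$ (take $H$ a triangle on $a<_H b<_H c$ with $E_l=\{(c,a)\}$: then $c\to b\to a$ is a blue path). What actually forces $u\notin S$ is the blue-edge degree bound you have already established: if there were a blue path from $u$ to $w$, chaining that bound along it would show that every non-isolated $x\in V_u$ in $F_1$ has at most $(c_{l-1})^h<c_l$ neighbours in $V_w$ in $F_1$, hence lies in $B_{uw}$; but then no copy could avoid $B_{uw}$ and the procedure could not have produced $\HH_l^{(2)}$. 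So $u\notin S$, and your greedy argument goes through. The paper does not address this point either; in its sole application of part~(ii) one has $u\in U$ and $w\in W$, and there the absence of blue edges from $U$ to $W$ forces any blue path starting in $U$ to stay in $U$, so the issue is moot.
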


			\begin{proof}

				The first property follows immediately from the definition of $F_i$ as the union of $H$-copies in $\F_i$: if a vertex is non-isolated in $F_i$ it is also non-isolated in $F_{i-1}$, and thus it must be contained in some $H$-copy in $\F_{i-1}$.
				
				Now let us see why the second property holds. Note that the directed edge $uw$ is in $E_l$ as $uw$ is a red edge in $D$. Thus, by definition of $\F_i$, the non-isolated vertex $x$ sends at least $c_l$ edges into $V_w$ in the graph $F_{i-1}$. This means that there is a collection of at least $c_l$ copies of $H$ in $\F_{i-1}$ that contain $x$, each of which uses a different edge from $x$ to $V_u$; denote this family of $H$-copies by $\F$. We claim that every $H$-copy in $\F$ intersects in $S$ with at most $h (c_{l-1})^h$ other $H$-copies in $\F$. Indeed, there are at most $h$ ways to choose an intersection point; suppose that the intersection is in $y \in V_v \subseteq S$. By choice of $S$, there is a path $(v_0 = v, v_1, \ldots, v_k = w)$ from $v$ to $w$ in $D_B$. This means that the degree (in $F_{i-1}$) of any vertex in $V_{v_j}$ into $V_{v_{j+1}}$ is at most $c_{l-1}$. Thus, there are at most $(c_{l-1})^k \le (c_{l-1})^h$ vertices in $V_w$ that can be in the same $H$-copy in $\F$ as $y$. Since each $H$-copy in $\F$ uses a different vertex of $V_w$, it follows that at most $(c_{l-1})^h$ copies of $H$ in $\F$ contain $y$, and in total there are at most $h (c_{l-1})^h$ copies of $H$ in $\F$ that intersect any single $H$-copy in $\F$. Since the total number of $H$-copies in $\F$ is $c_l \ge t (h (c_{l-1})^h+1)$, there is a collection of $t$ copies of $H$ in $\F$ whose intersections with $S$ are pairwise disjoint, as required. 
			\end{proof}

			We now wish to find a particular subset $U \subseteq V(H)$ such that the $(U, t)$-blow-up of $H$ behaves similarly to the sequence of graphs $F_1, \ldots, F_t$. The set $U$ will be defined in terms of a certain set $A \subseteq V(H)$, which we define now.
			Let $\P$ be a partition of $V(H)$ into strongly connected components according to $D_B$. Pick a set $A \subseteq V(H)$ that satisfies the following properties.
			\begin{enumerate}[label = \rm (\alph*)]
				\item \label{itm:reachable}
					every vertex in $D_B$ is reachable from $A$, i.e.\ for every vertex in $D_B$ there is a blue path to it from $A$,
				\item \label{itm:min-size}
					$|A|$ is minimal among sets that satisfy Condition \ref{itm:reachable},
				\item \label{itm:max-reachable}
					among sets that satisfy Conditions \ref{itm:reachable} and \ref{itm:min-size}, $A$ maximises 
					\begin{equation} \label{eqn:sum-reachable}
						\sum_{u \in A} (\#\text{ vertices reachable from $u$}).
					\end{equation}
			\end{enumerate}

			Let $W$ be the set of vertices in $V(H)$ that are in the same part of $\P$ as one of the vertices in $A$, and let $U = V(H) \setminus W$. In the following two claims we list some useful properties of $A$, $U$ and $W$.
			\begin{claim} \label{claim:properties-A}
				The following properties hold.
				\begin{enumerate}[label = \rm (\roman*)]
					\item \label{itm:one-from-part}
						$A$ contains at most one vertex from each part of $\P$,
					\item \label{itm:no-edges-between-parts}
						there are no edges of $D$ between distinct parts of $\P$ that are contained in $W$,
					\item \label{itm:no-blue-edges-back}
						there are no blue edges from $U$ to $W$.
				\end{enumerate}
			\end{claim}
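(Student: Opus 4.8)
The plan is to derive all three statements from the minimality of $|A|$ (Condition~\ref{itm:min-size}); I do not expect to need the tie-breaking Condition~\ref{itm:max-reachable} here — that should only be needed later. The one idea I would use repeatedly is a \emph{redundancy principle}: if some $a \in A$ has the property that every vertex reachable from $a$ by a blue path is also reachable by a blue path from some other $a' \in A$, then $A \setminus \{a\}$ still satisfies Condition~\ref{itm:reachable}, contradicting the minimality of $|A|$. Two facts are invoked throughout: each part $C \in \P$ is a strongly connected component of $D_B$, so any vertex of $C$ reaches any other vertex of $C$ by a blue path; and since $D$ contains both orientations $uw, wu$ of every edge $uw \in E(H)$ with exactly one of them blue, any two parts joined by an edge of $D$ are joined, in at least one direction, by a \emph{blue} edge.

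For \ref{itm:one-from-part} I would argue: if distinct $a_1, a_2 \in A$ lie in a common part $C$, then $a_1$ reaches $a_2$ inside $C$, so $a_2$ is redundant. For \ref{itm:no-edges-between-parts}: given an edge of $D$ between distinct parts $C_1, C_2 \subseteq W$, let $a_i$ be the vertex of $A$ in $C_i$ (unique by \ref{itm:one-from-part}); the underlying $H$-edge joins some $x \in C_1$ to some $y \in C_2$, one orientation is blue, say $x \to y$, and then $a_1$ reaches $x$ inside $C_1$, crosses the blue edge to $y$, and reaches all of $C_2$ — so $a_2$ is redundant. The case where the blue orientation is $y \to x$ is symmetric (then $a_1$ is redundant).

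For \ref{itm:no-blue-edges-back} I would suppose there is a blue edge $u \to w$ with $u \in U$ and $w \in W$, let $C \in \P$ be the part containing $w$ and let $a$ be the vertex of $A$ in $C$, and pick $a' \in A$ with a blue path to $u$ (which exists by \ref{itm:reachable}). The crux is to verify $a' \neq a$: if $a' = a$, then concatenating a blue path $a \rightsquigarrow u$, the blue edge $u \to w$, and a blue path $w \rightsquigarrow a$ inside $C$ produces a blue closed walk through $u$, which forces $u$ into the strongly connected component $C \subseteq W$, contradicting $u \in U$. Once $a' \neq a$ is established, $a'$ reaches $w$ (via $u$), hence reaches all of $C$, and in particular reaches $a$ and everything reachable from $a$; so $a$ is redundant, giving the final contradiction.

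I expect the only slightly delicate point to be the verification that $a' \neq a$ in \ref{itm:no-blue-edges-back} — without it one cannot conclude that $a$ is redundant. The rest is a mechanical application of the redundancy principle, so I anticipate no real obstacle beyond bookkeeping.
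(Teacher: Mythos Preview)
Your arguments for \ref{itm:one-from-part} and \ref{itm:no-edges-between-parts} are essentially identical to the paper's. One small inaccuracy: you write that for every edge of $H$ ``exactly one'' of the two orientations in $D$ is blue, but in fact both can be blue (the orientation $uw$ with $u >_H w$ may have been removed from $E_i$ during the procedure, and $wu$ was never in $E_1$ to begin with). This does not affect your argument, since all you need is that \emph{at least} one orientation is blue.

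Your treatment of \ref{itm:no-blue-edges-back} is genuinely different from the paper's, and in fact cleaner. The paper does use Condition~\ref{itm:max-reachable} here: it swaps the vertex $w' \in A$ in the part of $w$ for the vertex $u$, obtaining a set $A'$ of the same size that still satisfies \ref{itm:reachable} but with a strictly larger value of the sum \eqref{eqn:sum-reachable}, contradicting \ref{itm:max-reachable}. You instead observe that the vertex $a' \in A$ that reaches $u$ must be distinct from $a$ (else $u$ would lie in the strongly connected component of $a$, hence in $W$), and then $a'$ already reaches everything $a$ reaches, so $a$ can simply be deleted --- contradicting minimality \ref{itm:min-size} directly. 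This is correct, and it shows that Condition~\ref{itm:max-reachable} is not needed for this claim at all. Since the paper does not appear to use \ref{itm:max-reachable} anywhere else either, your argument actually suggests that the tie-breaking condition could be dropped from the definition of $A$ entirely; so your parenthetical expectation that it ``should only be needed later'' is unfounded.
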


			\begin{proof}
				Property \ref{itm:one-from-part} clearly holds because of the minimality of $|A|$ and the fact that for every part $X \in \P$, the set of vertices reachable from $X$ is the same as the set of vertices reachable from any individual vertex $x \in X$.

				For \ref{itm:no-edges-between-parts}, suppose that there is an edge $uw$ in $D$ with $u,w \in W$; without loss of generality $uw$ is blue. If we remove from $A$ the vertex from the same part of $\P$ as $w$, we obtain a smaller set that still satisfies Condition \ref{itm:reachable} above, a contradiction to the minimality of $A$.

				Now suppose that Property \ref{itm:no-blue-edges-back} does not holds, i.e.\ there is a blue edge $uw$ with $u \in U$ and $w \in W$. Let $A'$ be the set obtained from $A$ by removing the vertex $w'$ that is in the same part of $\P$ as $w$ and adding $u$. Note that every vertex that is reachable from $A$ is also reachable from $A'$. Moreover, every vertex that is reachable from $w'$ is also reachable from $u$, but $u$ is not reachable from $w'$, because otherwise $u$ and $w'$ are in the same strongly connected component, and hence in the same part of $\P$. It follows that  
				\begin{equation*}
					\sum_{u \in A'} (\#\text{ vertices reachable from $u$}) >
					\sum_{u \in A} (\#\text{ vertices reachable from $u$}),
				\end{equation*}
				a contradiction to the maximality property of $A$.
			\end{proof}

			\begin{claim} \label{claim:upper-bd-A}
				$|A| > r$.
			\end{claim}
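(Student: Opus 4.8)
The plan is to adapt the counting argument from the proof of \Cref{claim:l-small}, with the set $A$ now playing the role that a transversal of the components of $H$ played there, and with a sharper structural input about the bipartite degrees between the parts $V_u$. So I would assume, for contradiction, that $|A|\le r$, bound the number of $H$-copies in $\F_1=\HH_l^{(1)}$ from above by $O(n^{|A|})=O(n^r)$, and pair this with a lower bound $|\F_1|=\Omega(m)$ of exactly the same flavour as in \Cref{claim:l-small}; together these force $m=O(n^r)$, contradicting the standing assumption $m\ge c\cdot n^r$ once $c$ is large enough.

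The structural input I would isolate first is the following: \emph{for every blue directed edge $uw$ of $D$ and every $H$-copy $\phi\in\F_1$, once $\phi(u)$ is fixed there are at most $c_{l-1}$ possibilities for $\phi(w)$} (with $t$ read in place of $c_{l-1}$ when $l=1$). Its proof splits according to the orientation and fate of the edge $\{u,w\}$ of $H$. If $u<_H w$, then for any $\phi\in\HH_1\supseteq\F_1$ the vertex $\phi(w)$ is a neighbour of $\phi(u)$ lying after it in the degeneracy ordering $<$ of $G_0$, and $\phi(u)$ has at most $t$ such neighbours; this already covers the reverse of every red edge as well as one orientation of every edge deleted in Procedure~\ref{alg:H}. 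The only blue edges not yet covered are orientations $uw$ with $u>_H w$ of an edge $\{u,w\}$ deleted at some step $i_0\le l-1$; for those, every copy in $\HH_l^{(1)}$ maps $u$ into the set $B_{uw}$ formed at step $i_0$, and each vertex of $B_{uw}$ has at most $c_{i_0}\le c_{l-1}$ neighbours in $V_w$ inside $F_1$, which is exactly the required bound.

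Granting this, the upper bound on $|\F_1|$ is mechanical: $\phi|_A$ has at most $n^{|A|}$ choices, and since by Condition~\ref{itm:reachable} every vertex of $V(H)$ is reachable from $A$ along a blue path of length less than $h$, revealing the values of $\phi$ on the remaining vertices in order of blue-distance from $A$ and applying the structural input along each BFS edge shows $\phi|_A$ extends in at most $(c_{l-1})^{h}$ ways. Hence $|\F_1|\le (c_{l-1})^{h}\,n^{|A|}\le (c_{l-1})^{h}\,n^r$. For the lower bound, exactly as in \Cref{claim:l-small}, at most $e(H)\cdot t$ iterations of Procedure~\ref{alg:H} occur before $\HH_l^{(1)}$ is defined, each retaining at least a $\tfrac{1}{2e(H)}$-fraction of the current family, so $|\F_1|\ge\bigl(\tfrac{1}{2e(H)}\bigr)^{e(H)t}|\HH_1|\ge\bigl(\tfrac{1}{2e(H)}\bigr)^{e(H)t}\tfrac{m}{h^h h!}$. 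Combining the two bounds yields $m\le C\cdot n^r$ for a constant $C=C(H,T)$ independent of $c$, contradicting $m\ge c\cdot n^r$; this proves $|A|>r$.

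The step I expect to be the real obstacle is the structural input, and within it the observation that the two orientations of a deleted edge must be handled by genuinely different mechanisms — the degeneracy order of $G_0$ for the orientation "up" the order $<_H$, and the $B_e$-avoidance built into Procedure~\ref{alg:H} for the orientation "down" — together with keeping careful track of which ambient graph ($G_0$ versus $F_1$) each degree bound lives in, so that it remains valid for the $H$-copies in $\F_1=\HH_l^{(1)}$ that we are counting.
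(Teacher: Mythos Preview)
Your proposal is correct and follows essentially the same approach as the paper: assume $|A|\le r$, use that every vertex of $H$ is reachable from $A$ along blue edges, and propagate a degree bound of $c_{l-1}$ along each such edge to get $|\F_1|\le (c_{l-1})^{O(h)}n^{|A|}$, which together with $|\F_1|=\Omega(m)$ contradicts $m\ge c\,n^r$. The paper's proof is terser, leaving the blue-edge degree bound implicit, whereas you spell out the two mechanisms (degeneracy order for $u<_H w$, membership in $B_e$ for removed edges with $u>_H w$); this extra care is sound and indeed clarifies a point the paper glosses over, but the argument is the same.
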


			\begin{proof}
				Suppose that $|A| \le r$. As in the proof of \Cref{claim:l-small}, there are at most $n^{|A|}$ to embed $A$ in $V(F_1)$ (recall that $F_1$ is the union of all $H$-copies in $\F_1 = \HH_l^{(1)}$) in such a way that every $a \in A$ is sent to $V_a$. Fix such an embedding, and let $u \in V(H)$. Because there is a blue path from $A$ to $u$ (by Condition \ref{itm:reachable} in the definition of $A$), there are at most $(c_{l-1})^{h}$ vertices that $u$ could be mapped to which may form an $H$-copy in $\F_1$ together with the vertices that $A$ is mapped to. Thus, in total there are at most $(c_{l-1})^{h^2} \cdot n^r$ copies of $H$ in $\F_1$. As in the proof of \Cref{claim:l-small}, this implies that there are fewer than $c \cdot n^r$ copies of $H$ in $G$, a contradiction.
			\end{proof}

			Let $\Gamma$ be the $(U, t)$-blow-up of $H$ (see \Cref{defn:blow-up} and Figure~\ref{fig:blow-up}). Denote its vertices by $U \cup \left(\bigcup_{i \in [t]} W_i \right)$, where the $W_i$'s are copies of the set $W$ (so $\Gamma[U \cup W_i]$ induced a copy of $H$ for every $i \in [t]$). For every vertex $x$ in $\Gamma$, denote by $\phi(x)$ the vertex in $H$ that it corresponds to.
			By \Cref{claim:properties-A}~\ref{itm:one-from-part} and \ref{itm:no-edges-between-parts}, $H \setminus U$ consists of $|A| > r$ components. Because $r = r(H, T)$ (see \Cref{defn:r}), $\Gamma$ contains a copy of $T$. 
			
			Our final aim is to show that $G$ contains a copy of $T$, a contradiction to the assumptions on $G$. Consider a specific embedding of $T$ in $\Gamma$. Let $\{X_1, \ldots, X_k\}$ be a partition of $V(T)$, such that for every $i \in [k]$ the subgraph $T[X_i]$ is a maximal non-empty subtree of $T$ that is contained either in $W_j$, for some $j$, or in $U$. We assume, for convenience, that the ordering is such that there is an edge between $X_i$ and $X_1 \cup \ldots \cup X_{i-1}$ for every $i \in [k]$; in fact, there would be exactly one such edge as $T$ is a tree. By choice of the $X_i$'s and by definition of $\Gamma$, this edge must be an edge between some set $W_j$ and $U$. In order to reach the required contradiction, we prove the following game.
		
			\begin{claim}
				For every $i \in [k]$ there is a copy of $T[X_1 \cup \ldots \cup X_i]$ in $F_{t-(i-1)}$ such that $x$ is mapped to $V_{\phi(x)}$ for every $x \in X_1 \cup \ldots \cup X_i$.
			\end{claim}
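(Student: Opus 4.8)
The plan is to prove the claim by induction on $i$, strengthening it with the invariant that the copy of $T[X_1 \cup \ldots \cup X_i]$ sends every vertex to a \emph{non-isolated} vertex of $F_{t-(i-1)}$; this is what will let me feed the endpoints of the connecting edges into \Cref{claim:properties-F-i}. For the base case $i = 1$, I would use that $T[X_1]$ lies inside a single block of $\Gamma$ (either $U$ or some $W_j$), so $\phi$ sends $T[X_1]$ to a subtree of $H$. Picking any $H$-copy in $\F_t$ (the family is non-empty, as $|\F_t|$ is large) and mapping each $x \in X_1$ to the vertex of this copy lying in $V_{\phi(x)}$ gives a copy of $T[X_1]$ in $F_t$ with the desired property, and it respects the invariant since $H$ has no isolated vertices.

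For the inductive step, suppose I am given a copy $\psi$ of $T[X_1 \cup \ldots \cup X_i]$ in $F_{t-(i-1)}$, and let $op$ be the unique edge joining $X_{i+1}$ to $X_1 \cup \ldots \cup X_i$, with $o$ already embedded by $\psi$ and $p \in X_{i+1}$. As noted before the claim, $op$ runs between $U$ and some $W_j$, so exactly one of $\phi(o), \phi(p)$ lies in $U$; also $F_{t-(i-1)} \subseteq F_{t-i}$, so it remains to extend $\psi$ to $X_{i+1}$ inside $F_{t-i}$. First suppose $X_{i+1} \subseteq U$, so $\phi(p) \in U$ and $\phi(o) \in W$. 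Since $\Gamma$ contains only one copy of $U$, the map $\phi$ is injective on the union of all $U$-blocks with image contained in $U$, whereas every $W_j$-block maps into $\bigcup_{v \in W} V_v$; hence the coordinates $\{V_{\phi(v)} : v \in X_{i+1}\}$ are disjoint from every coordinate already used by $\psi$. It therefore suffices to find one $H$-copy $C$ in $\F_{t-i}$ containing $\psi(o)$, which exists by the first property in \Cref{claim:properties-F-i} applied to $F_{t-(i-1)}$ (valid since $\psi(o)$ is non-isolated there), and to place each $v \in X_{i+1}$ at the vertex of $C$ in $V_{\phi(v)}$; the edges inside $X_{i+1}$ and the edge $op$ are realised because $C$ is an $H$-copy containing $\psi(o)$ in coordinate $V_{\phi(o)}$.

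Now suppose $X_{i+1} \subseteq W_j$, so $\phi(p) \in W$ and $\phi(o) \in U$. By \Cref{claim:properties-A}~\ref{itm:no-blue-edges-back} the directed edge $\phi(o)\phi(p)$ (running from $U$ to $W$) is red, so the second property in \Cref{claim:properties-F-i}, applied to $F_{t-(i-1)}$ with $x = \psi(o)$, yields $t$ copies of $H$ in $\F_{t-i}$ containing $\psi(o)$ that are pairwise disjoint on $S := \bigcup_{v:\text{ there is a blue path from $v$ to $\phi(p)$}} V_v$. Because $\phi(X_{i+1})$ is a connected subset of $W$ and, by \Cref{claim:properties-A}~\ref{itm:no-edges-between-parts}, there are no edges of $D$ between distinct parts of $\P$ inside $W$, the set $\phi(X_{i+1})$ lies in a single part of $\P$, namely the one containing $\phi(p)$; by strong connectivity of this part in $D_B$ there is a blue path from each $v \in \phi(X_{i+1})$ to $\phi(p)$, so $\bigcup_{v \in \phi(X_{i+1})} V_v \subseteq S$. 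Since $\psi$ uses at most $|T| - 1 = t-1$ vertices and any vertex of $S$ lies in at most one of the $t$ copies, some copy $C$ meets $S$ only outside the image of $\psi$; placing each $v \in X_{i+1}$ at the vertex of $C$ in $V_{\phi(v)} \subseteq S$ extends $\psi$. In either case the new vertices lie in an $H$-copy, so the invariant is preserved and the induction goes through. Finally, taking $i = k$ (note $k \le |T| = t$, so $F_{t-(k-1)}$ is defined and is a subgraph of $F_1 \subseteq G$) produces a copy of $T$ in $G$, contradicting that $G$ is $T$-free; this contradiction forces $m = O(n^r)$ and completes the proof of \Cref{thm:main-expicit}.

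I expect the case $X_{i+1} \subseteq W_j$ to be the main obstacle. The difficulty is that the $H$-copy used to host the new block might collide with the part of $\psi$ already embedded in \emph{other} $W_{j'}$-blocks, which $\phi$ need not keep disjoint. This is precisely what the nested family $F_1 \supseteq \ldots \supseteq F_t$ produced by Procedure~\ref{alg:H} and the second property in \Cref{claim:properties-F-i} are built to overcome: they supply not one but $t$ candidate $H$-copies pairwise disjoint on the ``blue-reachable'' set $S$, where $S$ is simultaneously large enough to contain every coordinate $X_{i+1}$ needs (using that $\phi(X_{i+1})$ sits in one strongly connected blue part) and the copies are numerous enough that the at most $t-1$ already-used vertices cannot block all of them. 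By contrast, the $U$-block case is comparatively routine, since the blow-up structure already forces the coordinates $V_{\phi(v)}$, $v \in X_{i+1}$, to be unused.
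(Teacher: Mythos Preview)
Your proposal is correct and follows essentially the same route as the paper's proof: induction on $i$, the same two cases according to whether $X_{i+1}$ lies in $U$ or in some $W_j$, and the same use of \Cref{claim:properties-F-i}~(i) and~(ii) respectively. Your additions are presentational improvements rather than a different approach: packaging the non-isolatedness of the connecting vertex as an explicit invariant (the paper instead observes that a tree on at least two vertices has no isolated image, and handles $i=1$, $|X_1|=1$ ad hoc), and spelling out, via \Cref{claim:properties-A}~\ref{itm:no-edges-between-parts} and strong connectivity of parts of $\P$, why $\phi(X_{i+1})$ lands inside $S$, which the paper asserts without justification.
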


			\begin{proof}

				We prove the statement by induction on $i$.
				For $i = 1$, the statement can easily be seen to hold, by picking any $H$-copy in $\F_t$, and mapping each vertex of $X_1$ to the corresponding vertex in the copy of $H$.

				Now suppose that the statement holds for $i$; let $f_i : X_1 \cup \ldots \cup X_i \rightarrow V(F_{t-(i-1)})$ be the corresponding mapping of the vertices. Now, there are two possibilities to consider: $X_{i+1} \subseteq U$ or $X_{i+1} \subseteq W_j$ for some $j$. 
				
				Let us consider the first possibility. Let $uw$ be the edge between $X_1 \cup \ldots \cup X_{i}$ and $X_{i+1}$, where $u \in U$ and $w \in W_j$ for some $j$ (so $u \in X_{i+1}$ and $w \in X_1 \cup \ldots \cup X_{i}$). We may assume that $f_i(w)$ is non-isolated. Indeed, if $|X_1 \cup \ldots \cup X_i| \ge 2$, this is clear since $T[X_1 \cup \ldots \cup X_i]$ spans a tree. Otherwise, we must have that $i = 1$ and $|X_1| = 1$, but then we can choose $f_1(w)$ to be a non-isolated vertex in $V_w$. As $f_i(w)$ is non-isolated, by \Cref{claim:properties-F-i} (and the fact that $i \le k \le t$) there is an $H$-copy in $\F_{t-i}$ that contains $f_i(w)$; denote the corresponding embedding by $g : V(H) \rightarrow V(F_{t-i})$. We define $f_{i+1} : X_1 \cup \ldots \cup X_{i+1} \rightarrow V(F_{t-i})$ simply by
				\begin{align*}
					f_{i+1}(x) = \left\{
						\begin{array}{ll}
							f_i(x) & x \in X_1 \cup \ldots \cup X_i \\
							g(x) & x \in X_{i+1}.
						\end{array}
					\right.
				\end{align*}
				In order to show that $f_{i+1}$ is an embedding with the required properties, we need to show that it has the following three properties: it maps edges in $T[X_1 \cup \ldots \cup X_{i+1}]$ to edges in $F_{t-i}$; $f_{i+1}(x) \in V_{\phi(x)}$ for every $x \in X_1 \cup \ldots \cup X_{i+1}$; and $f_{i+1}$ is injective.

				We first show that $f_{i+1}$ preserves edges. This follows because $f_i$ and $g$ preserve edges (this holds for $g$ by definition, and holds for $f_i$ because it sends edges to edges of $F_{t-(i-1)}$ which is a subgraph of $F_{t-i}$) so edges inside $X_1 \cup \ldots \cup X_i$ and inside $X_{i+1}$ are mapped to edges in $F_{t-i}$, and moreover by choice of $g$ the only edge between these two sets is mapped to an edge of $F_{t-i}$.

				Next, we note that for every $x \in X_1 \cup \ldots \cup X_{i+1}$, we have $f_{i+1}(x) \in V_{\phi(x)}$. This is because this holds for both $f_i$ (by assumption) and $g$ (as $g$ corresponds to an $H$-copy in $\F_{t-i}$).

				Finally, we show that $f_{i+1}$ is injective. As both $f_i$ and $g$ are injective, it suffices to show that $g(x) \neq f_i(y)$ for every $x \in X_{i+1}$ and $y \in X_1 \cup \ldots \cup X_i$. This holds because $\phi(x) \neq \phi(y)$ (since $x$ is in $U$, it is the only vertex in $X_1 \cup \ldots \cup X_{i+1}$ with $\phi(x) = x$) and because $x$ and $y$ are mapped to $V_{\phi(x)}$ and $V_{\phi(y)}$, respectively, and these two sets are disjoint.
				
				Now we consider the second possibility, namely that $X_{i+1} \subseteq W_j$ for some $j$. Let $uw$ be the edge between $X_1 \cup \ldots \cup X_i$ and $X_{i+1}$, where $u \in U$ and $w \in W_j$ (so $w \in X_{i+1}$). By \Cref{claim:properties-A}~\ref{itm:no-blue-edges-back}, the edge $uw$ is red. Hence, by \Cref{claim:properties-F-i}, there is a collection of $t$ copies of $H$ in $\F_{t-i}$ that contain $f_i(u)$ and whose intersections with $S = \bigcup_{v: \text{ there is a blue path from $v$ to $w$}} V_v$ are pairwise vertex-disjoint. As $|X_1 \cup \ldots \cup X_i| < t$, it follows that there is an $H$-copy in $\F_{t-i}$ that contains $f_i(w)$ and whose intersection with $S$ is disjoint of $f_i(X_1 \cup \ldots \cup X_i)$; denote the corresponding embedding of $H$ by $g : V(H) \rightarrow V(F_{t-i})$. As before, define $f_{i+1} : X_1 \cup \ldots \cup X_{i+1} \rightarrow V(F_{t-i})$ by
				\begin{align*}
					f_{i+1}(x) = \left\{
						\begin{array}{ll}
							f_i(x) & x \in X_1 \cup \ldots \cup X_i \\
							g(x) & x \in X_{i+1}.
						\end{array}
					\right.
				\end{align*}
				As before, $f_{i+1}$ maps edges of $T[X_1 \cup \ldots X_{i+1}]$ to edges of $F_{t-i}$, and it sends every $x \in X_1 \cup \ldots \cup X_{i+1}$ to $V_{\phi(x)}$. Moreover, by choice of $g$ and since $g(X_{i+1}) \subseteq S$, we find that $g(X_{i+1})$ and $f_i(X_1 \cup \ldots \cup X_i)$ are disjoint. Since $f_i$ and $g$ are both injective, it follows that $f_{i+1}$ is injective. This completes the proof of the induction, and thus of the claim.
			\end{proof}

			By taking $i = k$ in the previous claim, we find that $F_{t-(k-1)}$ contains a copy of $T$. But $F_{t-(k-1)} \subseteq F_1 \subseteq G$ (note that $k \le t$), so $G$ has a copy of $T$, a contradiction. It follows that the number of $H$-copies in $G$ is at most $c \cdot n^{r(H,T)}$, as required.
	\end{proof}

	\section{Conclusion} \label{sec:conc}

		In this paper we showed how to determine, up to a constant factor, the function $\Ex(n, H, T)$ whenever $T$ is a tree. It would, of course, be interesting to determine this function completely, or at least asymptotically. While this may be hopeless in general, in some special cases this task may not be out of reach. For example, Alon and Shikhelman \cite{alon-shikhelman} consider the special case where $H = K_h$ for some $h < t$ and $t = |T|$. They ask if the $n$-vertex graph, which is the union of $\floor{n/t}$ disjoint cliques of size $t$, and perhaps one smaller clique on the remainder maximises the number of copies of $K_h$ among all $T$-free graphs on $n$ vertices. This question generalises a question of Gan, Loh and Sudakov \cite{gan-loh-sudakov}, who considered the case where $T$ is a star on $t$ vertices. In other words, they were interested in maximising the number of cliques of size $h$ among $n$-vertex graphs with maximum degree smaller than $t$. They proved that the aforementioned construction of disjoint cliques is the unique extremal example when $n \le 2t$, thus proving a conjecture of Engbers and Galvin \cite{engbers-galvin}. The question whether this construction is best for larger values of $n$ remains open.

		For other questions regarding the value of $\Ex(n, H, F)$, where $F$ need not be a tree, see \cite{alon-shikhelman}.

\bibliography{t-free}
\bibliographystyle{amsplain}
\end{document}